\numberwithin{equation}{section}
\newtheorem{theorem}{Theorem}[section]
\newtheorem{lemma}[theorem]{Lemma}
\newtheorem{proposition}[theorem]{Proposition}
\newtheorem{question}[theorem]{Question}
\newtheorem{definition}[theorem]{Definition}
\theoremstyle{remark}
\DeclareMathOperator{\diam}{diam}
\def\rr{{\mathbb R}}
\def\rn{{{\rr}^n}}
\def\fz{\infty}
\def\boz{{\Omega}}
\def\bint{{\ifinner\rlap{\bf\kern.35em--}
\int\else\rlap{\bf\kern.45em--}\int\fi}\ignorespaces}
\def\bbint{{\ifinner\rlap{\bf\kern.35em--}
\hspace{0.078cm}\int\else\rlap{\bf\kern.45em--}\int\fi}\ignorespaces}
\def\diam{{\mathop\mathrm{\,diam\,}}}
\def\r{\right}
\def\lf{\left}
\def\bint{{\ifinner\rlap{\bf\kern.35em--}
\int\else\rlap{\bf\kern.45em--}\int\fi}\ignorespaces}
\title[Zero volume boundary for extension domains from Sobolev to $BV$]{Zero volume boundary for extension domains\\ from Sobolev to $BV$}
\author{Tapio Rajala and Zheng Zhu }
\address{University of Jyv\"as\-kyl\"a \\
         Department of Mathematics and Statistics \\
         P.O. Box 35 (MaD) \\
         FI-40014 University of Jyv\"as\-kyl\"a \\
         Finland}
\email{tapio.m.rajala@jyu.fi; zheng.z.zhu@jyu.fi}
\date{April 2022}
\begin{document}

\begin{abstract}
In this note, we prove that the boundary of a $(W^{1, p}, BV)$-extension domain is of volume zero under the assumption that the domain $\boz$ is $1$-fat at almost every $x\in\partial\boz$. Especially, the boundary of any planar $(W^{1, p}, BV)$-extension domain is of volume zero. 
\end{abstract}

\maketitle

\section{Introduction}
Given $1\leq q\leq p\leq\fz$, a bounded domain $\boz\subset\rn$, $n\geq 2$, is said to be a $(W^{1, p}, W^{1, q})$-extension domain if there exists a bounded extension operator
\[
E\colon W^{1,p}(\boz)\mapsto W^{1,q}(\rn)
\]
and is said to be a $(W^{1, p}, BV)$-extension domain if there exists a bounded extension operator 
\[
E\colon W^{1, p}(\boz)\mapsto BV(\rn).
\]
The theory of Sobolev extensions is of interest in several fields in analysis. Partial motivations for the study of Sobolev extensions comes from the thoery of PDEs, for example, see \cite{Mazyabook}. It was proved in \cite{calderon,stein} that for every Lipschitz domain in $\rn$, there exists a bounded linear extension operator $E\colon W^{k, p}(\boz)\mapsto W^{k, p}(\rn)$ for each $k\in\mathbb N$ and $1\leq p\leq\fz$. Here $W^{k, p}(\boz)$ is the Banach space of all $L^p$-integrable functions whose distributional derivatives up to order $k$ are $L^p$-integrable. Later, the notion of $(\epsilon, \delta)$-domains was introduced by Jones in \cite{Jones}, and it was proved that for every $(\epsilon, \delta)$-domain, there exists a bounded linear extension operator $E\colon W^{k, p}(\boz)\mapsto W^{k, p}(\rn)$ for every $k\in\mathbb N$ and $1\leq p\leq\fz$.

In \cite{VGL}, a geometric characterization of planar $(W^{1, 2}, W^{1, 2})$-extension domain was given. By later results in \cite{PekkaJFA,Shvartsman,KRZ1,KRZ2}, we now have geometric characterizations of planar simply connected $(W^{1,p}, W^{1, p})$-extension domains for all $1\leq p\leq\fz$. A geometric characterization is also known for planar simply connected $(L^{k, p}, L^{k, p})$-extension domains with $2<p\leq\fz$, see \cite{ShvartsmanAdv,Whitney,Zobin}. Here $L^{k, p}(\boz)$ denotes the homogeneous Sobolev space which contains locally integrable functions whose $k$-th order distributional derivative is $L^p$-integrable. Beyond the planar simply connected case, geometric characterizations of Sobolev extension domains are still missing. However, several necessary properties have been obtained for general Sobolev extension domains.

For a measurable subset $F\subset\rn$, we use $|F|$ to denote its Lebesgue measure. In \cite{HKT2008:B, HKT2008}, Haj\l{}asz, Koskela and Tuominen proved for $1\leq p<\fz$ that a $\lf(W^{1, p}, W^{1,p}\r)$-extension domain $\boz\subset\rn$ must be Ahlfors regular which means that there exists a positive constant $C>1$ such that for every $x\in\overline\boz$ and $0<r<\min\lf\{1, \frac{1}{4}\diam\boz\r\}$, we have 
\begin{equation}\label{eq:regular}
|B(x, r)|\leq C|B(x, r)\cap\boz|.
\end{equation}
From the results in \cite{Kos:planarBV, Tapio}, we know that also $(BV, BV)$-extension domains are Ahlfors regular. For Ahlfors regular domains, the Lebesgue differentiation theorem then easily implies $|\partial\boz|=0$.

 In the case where $\boz$ is a planar Jordan $\lf(W^{1, p}, W^{1,p}\r)$-extension domain, $\boz$ has to be a so-called John domain when $1\leq p\leq 2$ and the complementary domain has to be John when $2\leq p<\fz$. The John condition implies that the Hausdorff dimension of $\partial\boz$ must be strictly less than 2, see \cite{KRmathann}. Recently, Lu$\check{c}$i\'c, Takanen and the first named author gave a sharp estimate on the Hausdorff dimension of $\partial\boz$, see \cite{LRTacv}. In general, the Hausdorff dimension of a $(W^{1, p}, W^{1, p})$-extension domain can well be $n$. 
 

The outward cusp domain with a polynomial type singularity is a typical example which is not a $(W^{1, p}, W^{1, p})$-extension domain for $1\leq p<\fz$. However, it is a $(W^{1, p}, W^{1, q})$-extension domain, for some $1\leq q<p\leq\fz$, see the monograph \cite{MPbook} and the references therein. Hence, for $1\leq q<p\leq\fz$, it is not necessary for a $(W^{1, p}, W^{1, q})$-extension domain to be Ahlfors regular. In the absence of Ahlfors regularity, one has to find alternative approaches for proving  $|\partial\boz|=0$. The first approach in \cite{ukhlov1,ukhlov2} was to generalize the Ahlfors regularity \eqref{eq:regular} to a Ahlfors-type estimate
\begin{equation}\label{eq:regular2}
    |B(x, r)|^p\leq C\Phi^{p-q}(B(x, r))|B(x, r)\cap\boz|^q
\end{equation}
for $(W^{1, p}, W^{1, p})$-extension domains with $n<q<p<\fz$. Here $\Phi$ is a bounded and quasiadditive set function generated by the $(W^{1, p}, W^{1, q})$-extension property and defined on open sets $U\subset\rn$, see Section \ref{sec:set}. By differentiating $\Phi$ with respect to the Lebesgue measure, one concludes that $|\partial\boz|=0$ if $\boz$ is
a $(W^{1, p}, W^{1, q})$-extension domain for $n < q < p < \fz$. Recently, Koskela, Ukhlov and the second named author \cite{KUZ} generalized this result 
and proved that the boundary of a $\lf(W^{1, p}, W^{1, q}\r)$-extension domain must be of volume zero for $n-1<q< p<\fz$ (and for $1\leq q< p<\fz$ on the plane).
For $1\leq q<n-1$ and $(n-1)q/(n-1-q)<p<\fz$, they constructed a $\lf(W^{1, p}, W^{1, q}\r)$-extension domain $\boz\subset\rn$ with $|\partial\boz|>0$. For the remaining range of exponents where $1\leq q\leq n-1$ and $q<p\leq(n-1)q/(n-1-q)$, it is still not clear whether the boundary of every $\lf(W^{1, p}, W^{1, q}\r)$-extension domain must be of volume zero. 

As is well-known, for every domain $\boz\subset\rn$, the space of functions of bounded variation $BV(\boz)$ strictly contains every Sobolev space $W^{1, q}(\boz)$ for $1\leq q\leq \fz$. Hence, the class of $\lf(W^{1, p}, BV\r)$-extension domains contains the class of $\lf(W^{1, p}, W^{1, q}\r)$-extension domains for every $1\leq q\leq p<\fz$. As a basic example to indicate that the containment is strict when $n \ge 2$, we can take the slit disk (the unit disk minus a radial segment) in the plane. The slit disk is a $\lf(W^{1, p}, BV\r)$-extension domain for every $1\leq p<\fz$, and even a $(BV, BV)$-extension domain; however it is not a $\lf(W^{1, p}, W^{1, q}\r)$-extension domain for any $1\leq q\leq p<\fz$. This basic example also shows that it is natural to consider the geometric properties of $\lf(W^{1, p}, BV\r)$-extension domains.
In this paper, we focus on the question whether the boundary of a $(W^{1, p}, BV)$-extension domain is of volume zero. 
Our first theorem tells us that the $(BV, BV)$-extension property is equivalent to the $\lf(W^{1, 1},  BV\r)$-extension property. Hence, a $(W^{1, 1}, BV)$-extension domain is Ahlfors regular and so its boundary is of volume zero.
\begin{theorem}\label{thm:1.1}
  A domain $\boz\subset\rn$ is a $(BV, BV)$-extension domain if and only if it is a $\lf(W^{1, 1}, BV\r)$-extension domain.
\end{theorem}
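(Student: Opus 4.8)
The plan is to prove the two implications separately, with the nontrivial direction being that a $(W^{1,1}, BV)$-extension domain is also a $(BV, BV)$-extension domain. One direction is immediate: since $W^{1,1}(\boz) \subset BV(\boz)$ with $\|u\|_{BV(\boz)} = \|u\|_{W^{1,1}(\boz)}$ for $u \in W^{1,1}(\boz)$, any bounded extension operator $E \colon BV(\boz) \to BV(\rn)$ restricts to a bounded operator $W^{1,1}(\boz) \to BV(\rn)$, so every $(BV, BV)$-extension domain is a $(W^{1,1}, BV)$-extension domain.

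For the converse, suppose $E \colon W^{1,1}(\boz) \to BV(\rn)$ is a bounded extension operator with operator norm $\|E\|$. Given $u \in BV(\boz)$, I would approximate $u$ by smooth functions: by the classical Anzellotti–Giaquinta / Meyers–Serrin-type approximation theorem for $BV$, there exists a sequence $u_j \in C^\infty(\boz) \cap W^{1,1}(\boz)$ with $u_j \to u$ in $L^1(\boz)$ and $\|Du_j\|(\boz) = \int_\boz |\nabla u_j|\,dx \to \|Du\|(\boz)$ (strict convergence in $BV(\boz)$); in particular $\|u_j\|_{W^{1,1}(\boz)} \to \|u\|_{BV(\boz)}$ and the sequence is bounded in $W^{1,1}(\boz)$. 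Then $Eu_j \in BV(\rn)$ with $\|Eu_j\|_{BV(\rn)} \le \|E\| \|u_j\|_{W^{1,1}(\boz)}$, so $(Eu_j)_j$ is bounded in $BV(\rn)$.

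The key step is to extract a limit defining $Eu$. Since each $Eu_j$ restricts to $u_j$ on $\boz$ and $\boz$ is bounded, the $Eu_j$ are supported (after a standard truncation argument, or because $BV$-extension operators can be taken to have image supported in a fixed bounded neighbourhood of $\overline\boz$) in a fixed ball $B$; hence $(Eu_j)_j$ is bounded in $BV(B)$, and by the compact embedding $BV(B) \hookrightarrow L^1(B)$ together with lower semicontinuity of total variation, a subsequence converges in $L^1_{loc}(\rn)$ to some $v \in BV(\rn)$ with $\|v\|_{BV(\rn)} \le \liminf_j \|Eu_j\|_{BV(\rn)} \le \|E\| \|u\|_{BV(\boz)}$. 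Because $u_j \to u$ in $L^1(\boz)$ and $Eu_j|_\boz = u_j$, we get $v|_\boz = u$, so $v$ is an extension of $u$ with the desired norm bound. Defining $\wz E u := v$ gives an extension operator $BV(\boz) \to BV(\rn)$ with $\|\wz E\| \le \|E\|$; if linearity of $\wz E$ is required one appeals to the known fact (as in the $(BV,BV)$ theory of \cite{Kos:planarBV, Tapio}) that existence of a bounded possibly nonlinear extension operator for $BV$ already implies existence of a bounded linear one, or one checks directly that a consistent linear choice can be made on a dense subspace and extended. The last assertion of the statement then follows: by \cite{Kos:planarBV, Tapio} a $(BV, BV)$-extension domain is Ahlfors regular, hence by the Lebesgue differentiation theorem $|\partial\boz| = 0$.

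The main obstacle is the passage to the limit: one must ensure the approximating extensions $Eu_j$ do not "escape to infinity" or lose mass, which is why controlling their supports (via boundedness of $\boz$ and a truncation/localization of $E$) is essential, and one must verify that the $L^1_{loc}$-limit genuinely lies in $BV(\rn)$ with the correct total-variation bound — this is exactly where lower semicontinuity of the variation and the compactness theorem for $BV$ enter. A secondary technical point is guaranteeing that the resulting operator can be taken linear, which I would handle by citing the corresponding reduction already present in the $(BV,BV)$-extension literature rather than reproving it here.
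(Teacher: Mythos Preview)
Your argument is correct but proceeds quite differently from the paper. You handle the easy direction the same way. For the nontrivial direction, the paper does \emph{not} use approximation and compactness; instead it invokes the Whitney smoothing operator $S_{\boz,\boz}$ of Garc\'ia-Bravo and Rajala \cite{Tapio}, which sends any $u\in BV(\boz)$ to a function $S_{\boz,\boz}(u)\in W^{1,1}(\boz)$ with $\|S_{\boz,\boz}(u)\|_{W^{1,1}(\boz)}\le C\|u\|_{BV(\boz)}$ and, crucially, $\|D(u-S_{\boz,\boz}(u))\|(\partial\boz)=0$ when the difference is extended by zero. One then sets $T(u)=u$ on $\boz$ and $T(u)=E(S_{\boz,\boz}(u))$ on $\rn\setminus\boz$; the trace condition makes the gluing legitimate and yields $T(u)\in BV(\rn)$ with the desired bound.

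Your compactness route (strict approximation in $BV$, extend the approximants, truncate, pass to an $L^1_{\rm loc}$-subsequential limit, use lower semicontinuity of the variation) is more elementary in that it avoids the Whitney smoothing machinery and uses only textbook $BV$ tools. The price is that the operator you obtain is nonconstructive and a~priori nonlinear, and your localization step genuinely uses that $\boz$ is bounded (which is the paper's standing convention, so this is fine). The paper's construction, by contrast, is explicit and produces a linear operator whenever $E$ is linear, but relies on the nontrivial input from \cite{Tapio}. Your closing remark about Ahlfors regularity and $|\partial\boz|=0$ is correct and matches what the paper notes after the theorem, though it is not part of the statement itself.
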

Since, $W^{1,1}(\boz)$ is a proper subspace of $BV(\boz)$ with $\|u\|_{W^{1,1}(\boz)}=\|u\|_{BV(\boz)}$ for every $u\in W^{1, 1}(\boz)$, $(BV, BV)$-extension property implies $(W^{1, 1}, BV)$-extension property immediately. The other direction from $(W^{1, 1}, BV)$-extension property to $(BV, BV)$-extension property is not as straightforward, as $W^{1, 1}(\boz)$ is only a proper subspace of $BV(\boz)$. The essential tool here is the Whitney smoothing operator constructed by Garc\'ia-Bravo and the first named author in \cite{Tapio}. This Whitney smoothing operator maps every function in $BV(\boz)$ to a function in $W^{1, 1}(\boz)$ with the same trace on $\partial\boz$, so that the norm of the image in $W^{1, 1}(\boz)$ is uniformly controlled from above by the norm of the corresponding preimage in $BV(\boz)$.

With an extra assumption that $\boz$ is $q$-fat at almost every point on the boundary $\partial\boz$, in \cite{KUZ} it was shown that the boundary of a $(W^{1,p}, W^{1, q})$-extension domain is of volume zero when $1\leq q<p<\fz$. The essential point there was that the $q$-fatness of the domain on the boundary guarantees the continuity of a $W^{1, q}$-function on the boundary. Maybe a bit surprisingly, the assumption that the domain is $1$-fat at almost every point on the boundary also guarantees that the boundary of a $(W^{1, p}, BV)$-extension domain is of volume zero. In particular, every planar domain is $1$-fat at every point of the boundary. Hence, we have the following theorem.
\begin{theorem}\label{thm:Rn}
  Let $\boz\subset\rn$ be a $(W^{1, p}, BV)$-extension domain for $1\leq p<\fz$, which is $1$-fat at almost every $x\in\partial\boz$. Then $|\partial\boz|=0$. In particular, for every planar $(W^{1, p}, BV)$-extension domain $\boz$ with $1\leq p<\fz$, we have $|\partial\boz|=0$.
\end{theorem}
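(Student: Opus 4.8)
The plan is to argue by contradiction: suppose $|\partial\boz|>0$ and exploit the $(W^{1,p},BV)$-extension property together with the $1$-fatness assumption to produce a function whose extension cannot lie in $BV(\rn)$. First I would recall (as in \cite{KUZ}) that the $1$-fatness of $\boz$ at a boundary point $x$ gives a quantitative lower bound on the $1$-capacity of $\boz\cap B(x,r)$ inside $B(x,r)$, and that this forces Sobolev functions on $\boz$ to have well-defined, controlled boundary behaviour near such points; in the planar case every domain is $1$-fat at every boundary point, so the conclusion there is automatic once the general statement is proved. The key geometric input is that on the set $G\subset\partial\boz$ of positive measure where $1$-fatness holds, one can, after passing to a subset of density points, find a Vitali-type family of balls $B(x_i,r_i)$ centred on $G$ on which the domain occupies a definite capacity-fraction.

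Next I would use the extension operator $E\colon W^{1,p}(\boz)\to BV(\rn)$ to transport a suitably chosen test function. The natural candidate is a function $u$ on $\boz$ that is essentially locally constant on the Whitney pieces of $\boz$ but whose constants are arranged to oscillate wildly across the boundary pieces accumulating on $G$ — concretely, take $u\equiv 0$ on $\boz$, or rather a sequence $u_k$ supported near a finite subfamily of the balls, with $\|u_k\|_{W^{1,p}(\boz)}$ bounded but with the trace values on the two "sides" of $\partial\boz$ near each $B(x_i,r_i)$ differing by a fixed amount. Because $Eu_k\in BV(\rn)$ and $|\partial\boz|>0$, the set where $Eu_k$ must jump has positive measure, and one estimates $|DEu_k|(\rn)$ from below by summing the jump contributions across the boundary balls: each ball contributes at least a constant times $\HH^{n-1}$ of a separating surface, and $1$-fatness guarantees these surfaces have controlled, non-degenerate size. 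Summing over a family of $N$ balls gives $\|Eu_k\|_{BV(\rn)}\gtrsim N$ while $\|u_k\|_{W^{1,p}(\boz)}$ stays bounded, contradicting boundedness of $E$ once $N$ is taken large.

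The main obstacle I anticipate is making the "oscillating trace" construction rigorous in the $BV$ target: unlike the $W^{1,q}$ setting of \cite{KUZ}, where $q$-fatness yields genuine continuity of the boundary trace and one can differentiate the quasiadditive set function $\Phi$ against Lebesgue measure, here the extension only lands in $BV$, so the "trace" of $Eu_k$ is only defined up to an $\HH^{n-1}$-null set and may itself be discontinuous. The careful point is therefore to show that $1$-fatness still forces enough regularity of $Eu_k$ near $\partial\boz$ that a genuine jump of definite size is unavoidable — this is where I would lean on a capacitary argument: if $Eu_k$ did not jump across a positive-measure portion of $\partial\boz$, then its restriction to $\boz$ would have a boundary trace agreeing (a.e.-$\HH^{n-1}$) with that of the constant extension, and $1$-fatness plus the Sobolev–Poincaré inequality on the fat sets would propagate this into $\boz$, contradicting the prescribed oscillation of $u_k$. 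Alternatively, and perhaps more cleanly, one can invoke Theorem~\ref{thm:1.1}-type reasoning to first replace $Eu_k$ by a $W^{1,1}$ competitor with the same trace (via the Whitney smoothing operator of \cite{Tapio}) and then run the argument entirely in $W^{1,1}(\rn)$, where the trace is unambiguous; reconciling the smoothing step with the $1$-fatness localisation is the delicate technical core of the proof.
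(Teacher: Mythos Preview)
Your proposal has a genuine gap: you dismiss the set-function differentiation approach of \cite{KUZ} as unworkable because the extension only lands in $BV$, and then try to replace it with an ad hoc ``oscillating'' construction that is both vague and conceptually problematic. In fact the $\Phi$-approach \emph{does} work, and it is precisely what the paper uses. The ingredient you are missing is that $1$-fatness interacts with $BV$ functions in a strong enough way: by a result of Lahti, the precise representative $\tilde v$ of any $v\in BV(\rn)$ is $1$-finely continuous $\mathcal H^{n-1}$-a.e.\ off its jump set, and since $|S_v|=0$, this holds Lebesgue-a.e. Consequently, if $v\big|_{B(x,r)\cap\boz}\equiv c$ and $\boz$ is $1$-fat at a.e.\ boundary point, then $v=c$ a.e.\ on $B(x,r)\cap\partial\boz$ as well. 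This is exactly the boundary control you thought was unavailable. With it in hand, the paper fixes a Lebesgue density point $x$ of $\partial\boz$ where $\overline{D\Phi}(x)<\infty$, takes the single radial cutoff $u$ (equal to $1$ on $B(x,r/4)\cap\boz$, zero outside $B(x,r/2)$), extends to $v\in BV(B(x,r))$ with $\|Dv\|$ controlled by $\Phi(B(x,r))^{1/k}\|u\|_{W^{1,p}}$, and then uses the boundary-value lemma plus the $(1,1)$-Poincar\'e inequality to bound $\int_{B(x,r)}|v-v_{B(x,r)}|$ from below by $c|B(x,r)|$. This yields $\Phi(B(x,r))^{p-1}|B(x,r)\cap\boz|\ge c|B(x,r)|^p$, forcing the density of $\partial\boz$ at $x$ strictly below $1$ --- a contradiction.

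Your alternative route has specific problems beyond vagueness. You speak of prescribing trace values on the ``two sides'' of $\partial\boz$, but $u_k$ lives only on $\boz$; there is no second side to prescribe. You also assert that ``the set where $Eu_k$ must jump has positive measure'', which is impossible for any $BV$ function since $|S_{Eu_k}|=0$ always; the measure that matters on $\partial\boz$ is Lebesgue (via density points), not $\mathcal H^{n-1}$ on a jump set. Finally, invoking the Whitney smoothing operator of \cite{Tapio} here goes in the wrong direction: it maps $BV(\boz)\to W^{1,1}(\boz)$, not $BV(\rn)\to W^{1,1}(\rn)$, and is used in the paper only for the $p=1$ case (\Cref{thm:1.1}), which reduces to the known Ahlfors regularity of $(BV,BV)$-extension domains.
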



In light of the results and example given in \cite{KUZ}, the most interesting open question is what happens in the 
range $1<p\leq (n-1)/(n-2)$ of exponents. For this range, we do not know whether the boundary of a $(W^{1, p}, BV)$-extension domain must be of volume zero. If a counterexample exists in this range, it might be easier to construct it in the $(W^{1, p}, BV)$-case rather than the $(W^{1, p},W^{1,1})$-case. Hence we leave it as a question here.
\begin{question}
For $1<p\leq(n-1)/(n-2)$, is the boundary of a $(W^{1, p}, BV)$-extension domain of volume zero?
\end{question}

\section{Preliminaries}
 For a locally integrable function $u\in L^1_{\rm loc}(\boz)$ and a measurable subset $A\subset\boz$ with $0<|A|<\fz$, we define
\[u_A:=\bint_Eu(y)\,dy=\frac{1}{|A|}\int_Au(y)\,dy.\]
\begin{definition}\label{de:Sobolev}
   Let $\boz\subset\rn$ be a domain. For every $1\leq p\leq\fz$, we define the Sobolev space $W^{1, p}(\boz)$ to be 
   \[W^{1, p}(\boz):=\lf\{u\in L^p(\boz): \nabla u\in L^p(\boz;\rn)\r\},\]
   where $\nabla u$ denotes the distributional gradient of $u$. It is equipped with the nonhomogeneous norm
   \[\|u\|_{W^{1, p}(\boz)}=\|u\|_{L^p(\boz)}+\|\nabla u\|_{L^p(\boz)}.\]
\end{definition}
Now, let us give the definition of functions of bounded variation.
\begin{definition}\label{de:BV}
   Let $\boz\subset\rn$ be a domain. A function $u\in L^1(\boz)$ is said to have bounded variation and denoted $u\in BV(\boz)$ if 
   \[\|Du\|(\boz):=\sup\lf\{\int_\boz f{\rm div}(\phi) dx:\phi\in C^1_o(\boz;\rn), |\phi|\leq 1\r\}<\fz.\]
   The space $BV(\boz)$ is equipped with the norm
   \[\|u\|_{BV(\boz)}:=\|u\|_{L^1(\boz)}+\|Du\|(\boz).\]
\end{definition}

\begin{definition}\label{de:extension}
   We say that a domain $\Omega\subset\rn$ is a $\lf(W^{1, p}, BV\r)$-extension domain for $1\leq p<\fz$, if there exists a bounded extension operator $E\colon W^{1, p}(\boz)\mapsto BV(\rn)$ such that for every $u\in W^{1, p}(\boz)$, we have $E(u)\in BV(\rn)$ with $E(u)\big|_\boz\equiv u$ and 
   \[\|E(u)\|_{BV(\rn)}\leq C\|u\|_{W^{1, p}(\boz)}\]
   for a constant $C>1$ independent of $u$.
\end{definition}

Let $U\subset\rn$ be an open set and $A\subset U$ be a measurable subset with $\overline A\subset U$. The $p$-admissible set $\mathcal W_p(A; U)$ is defined by setting 
\[\mathcal W_p(A; U):=\lf\{u\in W^{1, p}_0(U)\cap C(U):u\big|_{A}\geq 1\r\}.\]
\begin{definition}\label{de:capacity}
   Let $U\subset\rn$ be an open set and $A\subset U$ with $\overline{A}\subset U$. The relative $p$-capacity $Cap_p(A; U)$ is defined by setting 
   \[Cap_p(A; U):=\inf_{u\in\mathcal W_p(A;U)}\int_{U}|\nabla u(x)|^p\,dx.\]
\end{definition}
Following Lahti \cite{Panu}, we define $1$-fatness below.
\begin{definition}\label{de:1fat}
   Let $A\subset\rn$ be a measurable subset. We say that $A$ is $1$-thin at the point $x\in\rn$, if 
   \[\lim_{r\to0^+}r\frac{Cap_1\lf(A\cap B(x, r); B(x, 2r)\r)}{\lf|B(x, r)\r|}=0.\]
   If $A$ is not $1$-thin at $x$, we say that $A$ is $1$-fat at $x$.
   Furthermore, we say that a set $U$ is $1$-finely open, if $\rn\setminus U$ is $1$-thin at every $x\in U$. 
\end{definition}
By \cite[Lemma 4.2]{Panu}, the collection of $1$-finely open sets is a topology on $\rn$. For a function $u\in BV(\rn)$, we define the lower approximate limit $u^\star$ by setting 
\[u_\star(x):=\sup\lf\{t\in\overline{\rr}:\lim_{r\to 0^+}\frac{\lf|B(x, r)\cap\{u<t\}\r|}{\lf|B(x, r)\r|}=0\r\}\]
and the upper approximate limit $u_\star$ by setting 
\[u^\star(x):=\inf\lf\{t\in\overline\rr:\lim_{r\to 0^+}\frac{\lf|B(x, r)\cap\{u>t\}\r|}{\lf|B(x, r)\r|}=0\r\}.\]
The set 
\[S_u:=\lf\{x\in\rn: u_\star(x)<u^\star(x)\r\}\]
is called the jump set of $u$. By the Lebesgue differentiation theorem, $\lf|S_u\r|=0$. Using the lower and upper approximate limits, we define the precise representative $\tilde u:=(u^\star+u_\star)/2$. The following lemma was proved in \cite[Corollary 5.1]{Panu}. 
\begin{lemma}\label{lem:1-fine}
   Let $u\in BV(\rn)$. Then $\tilde u$ is $1$-finely continuous at $\mathcal H^{n-1}$-almost every $x\in\rn\setminus S_u$.
\end{lemma}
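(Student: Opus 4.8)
The statement is a fine-topological sharpening of the Lebesgue-point (and jump-structure) theorem for $BV$ functions, so the plan is to bound the exceptional set: I would show that the set $F\subseteq\rn\setminus S_u$ of points at which $\tilde u$ fails to be $1$-finely continuous satisfies $\mathcal H^{n-1}(F)=0$. First I would reformulate fine continuity at a point $x$. Since $Cap_1$ is subadditive, a finite union of $1$-thin sets is $1$-thin; hence $\tilde u$ is $1$-finely continuous at $x$ as soon as, for every $\epsilon>0$, both $\{\tilde u\geq\tilde u(x)+\epsilon\}$ and $\{\tilde u\leq\tilde u(x)-\epsilon\}$ are $1$-thin at $x$. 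Because $r\,Cap_1\big(A\cap B(x,r);B(x,2r)\big)/|B(x,r)|$ is a dimensional constant times $Cap_1\big(A\cap B(x,r);B(x,2r)\big)/r^{n-1}$, the whole problem is to control the $(n-1)$-capacitary density at $x$ of super- and sub-level sets of $\tilde u$.

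Next I would pass to the level sets of $u$ itself. By the coarea formula $\|Du\|(\rn)=\int_{\rr}P(\{u>t\},\rn)\,dt$, so $E_t:=\{u>t\}$ has finite perimeter for a.e. $t$; fix a countable dense set $D\subseteq\rr$ of such good levels. The point of this reduction is a clean pointwise description of $\tilde u$: if $x\notin S_u$ and $a:=\tilde u(x)=u_\star(x)=u^\star(x)\in\rr$ (which holds for all but an $\mathcal H^{n-1}$-null set, since $u\in BV$), then for every $t<a$ one has $\lim_{r\to0^+}|B(x,r)\cap\{u\le t\}|/|B(x,r)|=0$, i.e. $x\in E_t^1$, and for every $t>a$ one has $x\in E_t^0$, where $E_t^1,E_t^0$ denote the sets of Lebesgue density $1$ and $0$. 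For good $t$, De Giorgi's structure theorem gives $\|D\chi_{E_t}\|=\mathcal H^{n-1}\llcorner\partial^*E_t$ with $\mathcal H^{n-1}(\partial^*E_t)=P(E_t)<\infty$, and $\mathcal H^{n-1}$-a.e. point of $\rn$ lies in $E_t^1$, $E_t^0$, or $\partial^*E_t$.

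The crux is a capacitary density estimate: a set $E$ of finite perimeter is $1$-thin at $\mathcal H^{n-1}$-a.e. point of $E^0$ (and symmetrically $\rn\setminus E$ is $1$-thin at $\mathcal H^{n-1}$-a.e. point of $E^1$). I would prove this by bounding $Cap_1$ from above using an admissible test function manufactured from $\chi_E$ by truncation and mollification, which yields a relative-isoperimetric-type estimate of the form
\[
Cap_1\big(E\cap B(x,r);B(x,2r)\big)\ \lesssim\ \frac{|E\cap B(x,2r)|}{r}+P\big(E;B(x,2r)\big).
\]
Dividing by $r^{n-1}$ and letting $r\to0^+$: the first term vanishes at every density-zero point of $E$, while the second vanishes at $\mathcal H^{n-1}$-a.e.\ point off $\partial^*E$, because $P(E;\cdot)=\mathcal H^{n-1}\llcorner\partial^*E$ has zero $(n-1)$-dimensional density there. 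This is exactly $1$-thinness of $E$ at $x$.

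Assembling, for $\mathcal H^{n-1}$-a.e.\ $x\notin S_u$ and every good $t>\tilde u(x)$, the set $E_t=\{u>t\}$ is $1$-thin at $x$; since $\{\tilde u\geq t'\}$ for $t'\in(\tilde u(x),t)$ sits, up to the essential boundary $\partial^*E_t$ already controlled by the estimate, inside $E_t^1$, the super-level set $\{\tilde u\geq\tilde u(x)+\epsilon\}$ is $1$-thin at $x$; the symmetric argument handles sub-level sets. Letting $t$ range over $D$ toward $\tilde u(x)$ from both sides gives, for every $\epsilon>0$, the $1$-thinness of $\{|\tilde u-\tilde u(x)|\geq\epsilon\}$, i.e.\ the $1$-fine continuity of $\tilde u$ at $x$. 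The main obstacle is the third step: at $p=1$ the variational capacity is \emph{not} comparable to Lebesgue measure and admissible minimizers need not exist in $W^{1,1}_0$, so one must work through the comparability of $Cap_1$ with $(n-1)$-dimensional Hausdorff content and produce a continuous competitor from $\chi_E$ with the displayed gradient bound \emph{uniformly} as $r\to0^+$; reconciling the raw level set $\{u>t\}$ with the precise super-level set $\{\tilde u\geq t'\}$ on sets that are Lebesgue-null yet possibly of positive $1$-capacity is the delicate technical point.
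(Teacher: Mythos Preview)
The paper does not give its own proof of this lemma; it simply records it as \cite[Corollary 5.1]{Panu} (Lahti). So there is no in-paper argument to compare against.

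Your outline is a faithful sketch of the actual proof in Lahti's paper: reduce to the level sets $E_t=\{u>t\}$ via the coarea formula, invoke De Giorgi's structure theorem, and show that a set of finite perimeter is $1$-thin at $\mathcal H^{n-1}$-a.e.\ point of density $0$ (and its complement at points of density $1$) by the capacitary density estimate you display. The estimate
\[
Cap_1\big(E\cap B(x,r);B(x,2r)\big)\ \lesssim\ \frac{|E\cap B(x,2r)|}{r}+P\big(E;B(x,2r)\big)
\]
is correct, but obtaining it with the paper's definition of $Cap_1$ (which demands \emph{continuous} competitors in $W^{1,1}_0$ with $u\ge 1$ on $A$) is exactly the work you flag at the end: a mollification of $\chi_E$ is only $\approx 1/2$ near $\partial^*E$, so one either passes through the BV-relative capacity (shown in Lahti's paper to be comparable to $Cap_1$) or through the comparability of $Cap_1$ with the Hausdorff content $\mathcal H^{n-1}_\infty$ together with the boxing inequality. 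Likewise, the step from $\{u>t\}$ to $\{\tilde u\ge t'\}$ requires Federer's theorem that these differ only by an $\mathcal H^{n-1}$-null (hence $Cap_1$-null) set. So your strategy is correct and matches the literature, but the two ``obstacles'' you isolate are where essentially all of the labour lies; as written the proposal is a roadmap rather than a proof.
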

The following lemma for $u\in W^{1, 1}(\rn)$ was proved in \cite[Lemma 2.6]{KUZ}, which is also a corollary of a result in \cite{Tero}. We generalize it to $BV(\rn)$ here.
\begin{lemma}\label{le:fat}
   Let $\boz\subset\rn$ be a domain which is $1$-fat at almost every point $x\in\partial\boz$. If $u\in BV(\rn)$ with $u\big|_{B(x, r)\cap\boz}\equiv c$ for some $x\in\partial\boz$, $0<r<1$ and $c\in\rr$. Then $u(y)=c$ for almost every $y\in B(x, r)\cap\partial\boz$.
\end{lemma}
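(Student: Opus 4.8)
The plan is to reduce Lemma~\ref{le:fat} to the two facts already collected in the excerpt: the $1$-fine continuity of the precise representative $\tilde u$ at $\mathcal H^{n-1}$-almost every point off the jump set (Lemma~\ref{lem:1-fine}), and the fact that $1$-fatness of $\boz$ at $x$ forces the trace of $\tilde u$ at $x$ to see the constant value $c$. Fix the ball $B:=B(x,r)$ with $u\equiv c$ a.e.\ on $B\cap\boz$. First I would note that it suffices to prove $\tilde u(y)=c$ for $\mathcal H^{n-1}$-almost every $y\in B\cap\partial\boz$ that is a point of $1$-fatness of $\boz$ and a point of $1$-fine continuity of $\tilde u$ off $S_u$: by hypothesis the set of $y\in\partial\boz$ failing $1$-fatness is Lebesgue-null, and by Lemma~\ref{lem:1-fine} together with $|S_u|=0$ the set of $y$ failing the second condition is also Lebesgue-null (the $\mathcal H^{n-1}$-null exceptional set from Lemma~\ref{lem:1-fine} is in particular Lebesgue-null), so the union of bad points has measure zero and, since $\tilde u=u$ a.e., the conclusion $u(y)=c$ for a.e.\ $y\in B\cap\partial\boz$ follows. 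Thus the whole argument is pointwise at a ``good'' point $y$.

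So fix such a good point $y\in B\cap\partial\boz$. The key step is to use $1$-fatness of $\boz$ at $y$ to show that $c$ is an approximate-type limit of $u$ along $\boz$, and then to upgrade this to $\tilde u(y)=c$ via $1$-fine continuity. Concretely, consider the set $A:=\{\tilde u\neq c\}$ (or, for a cleaner capacity estimate, the superlevel/sublevel sets $\{\tilde u>c+\varepsilon\}$ and $\{\tilde u<c-\varepsilon\}$). Since $\tilde u=u=c$ a.e.\ on $B\cap\boz$, the set $A\cap B$ is contained, up to a Lebesgue-null set, in $B\setminus\boz$. The heart of the matter is the capacity comparison: I want to show that $\boz^c\cap B(y,\rho)$ being ``capacitarily large'' near $y$ (which is exactly $1$-fatness) is incompatible with $\tilde u$ taking a value $\neq c$ on a $1$-finely positive portion of $\boz^c$ near $y$, unless $\tilde u(y)=c$ itself. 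The mechanism is that $1$-fine continuity of $\tilde u$ at $y$ means there is a $1$-finely open set $U\ni y$ on which $\tilde u$ is close to $\tilde u(y)$; if $\tilde u(y)\neq c$, then on $U$ we have $|\tilde u-c|$ bounded below by a positive constant, while on $B\cap\boz$ we have $\tilde u=c$. Hence $U$ must avoid $B\cap\boz$ up to a $1$-thin set, i.e.\ $\boz^c$ is $1$-fat at $y$ while simultaneously $U\setminus\boz^c$ is forced to be $1$-thin at $y$ — but $U$ being $1$-finely open means $\rn\setminus U$ is $1$-thin at $y$, and combining, $B\cap\boz=(B\cap\boz\cap U)\cup(B\cap\boz\setminus U)$ would be $1$-thin at $y$, contradicting that $\boz$ itself is $1$-fat at $y$ (a domain containing a ball's worth of interior points arbitrarily close to $y$ cannot be $1$-thin at $y$, since $1$-thinness of $\boz$ would in particular make $\boz^c$ $1$-finely open hence $\partial\boz$ avoidable — more carefully, $1$-fatness of $\boz$ at $y$ is the assumed hypothesis on $\partial\boz$, so $\boz$ is $1$-fat at $y$ by definition of the hypothesis applied at the good point). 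This forces $\tilde u(y)=c$.

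The main obstacle I anticipate is making the capacity bookkeeping in the previous paragraph precise: one must be careful that ``$1$-fat at $y$'' is a statement about $Cap_1$ of $A\cap B(y,\rho)$ inside $B(y,2\rho)$ normalized by $|B(y,\rho)|$ and scaled by $\rho$, and that this capacity is subadditive and monotone so that splitting $\boz$ across the $1$-finely open set $U$ is legitimate; in particular I would want the elementary facts that a subset of a $1$-thin set is $1$-thin, and that a finite (or countable, with care) union of $1$-thin sets is $1$-thin, both of which follow from subadditivity of $Cap_1$ and are implicit in the statement from \cite{Panu} that $1$-finely open sets form a topology. A secondary technical point is passing between $u$ and $\tilde u$: the hypothesis is $u\equiv c$ a.e.\ on $B\cap\boz$, so $\tilde u\equiv c$ \emph{everywhere} on the interior of $B\cap\boz$ (since at a density-one point where the function is essentially constant the precise representative equals that constant), and on $\partial\boz\cap B$ we only control $\tilde u$ off $S_u$; but $\mathcal H^{n-1}(S_u)$ need not be zero, yet $|S_u|=0$, which is all we need since the final conclusion is only ``almost every $y$''. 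Once these are in place the argument is short, and the novelty over the $W^{1,1}$ case in \cite{KUZ} is precisely that Lemma~\ref{lem:1-fine} supplies the $1$-fine continuity of $\tilde u$ that replaces the (generally false for $BV$) $1$-quasicontinuity of $u$ itself.
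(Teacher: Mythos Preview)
Your proposal is correct and follows essentially the same route as the paper: throw away the Lebesgue-null exceptional sets (points of $\partial\boz$ where $1$-fatness fails, points in $S_u$, and points where Lemma~\ref{lem:1-fine} fails), and at each remaining $y\in B(x,r)\cap\partial\boz$ combine $1$-fine continuity of $\tilde u$ with $1$-fatness of $\boz$ at $y$ to force $\tilde u(y)=c$. The paper compresses your contradiction argument into the single sentence ``any $1$-fine neighborhood of $y$ must intersect $B(x,r)\cap\boz$'', which is exactly what your decomposition $B\cap\boz=(B\cap\boz\cap U)\cup(B\cap\boz\setminus U)$ establishes; in fact you can tighten your argument slightly by noting that $U\cap B\cap\boz=\emptyset$ outright (not just up to a $1$-thin set), since $\tilde u\equiv c$ everywhere on the open set $B\cap\boz$ while $\tilde u\ne c$ on $U$, so the subadditivity worry you flag as a ``main obstacle'' does not actually arise.
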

\begin{proof}
   Let $u\in BV(\rn)$ satisfy the assumptions. Then the precise representative $\tilde u\big|_{B(x, r)\cap\boz}\equiv c$. Since $|S_u|=0$, by Lemma \ref{lem:1-fine}, there exists a subset $N_1\subset\rn$ with $|N_1|=0$ such that $\tilde u$ is $1$-finely continuous on $\rn\setminus N_1$. By the assumption, there exists a measure zero set $N_2\subset\partial\boz$ such that $\boz$ is $1$-fat on $\partial\boz\setminus N_2$. By \Cref{de:1fat}, one can see that $B(x, r)\cap\boz$ is also $1$-fat on $(B(x, r)\cap\partial\boz)\setminus N_2$. For every $y\in(B(x, r)\cap\partial\boz)\setminus(N_1\cup N_2)$, since $\tilde u$ is $1$-finely continuous on it and any $1$-fine neighborhood of $y$ must intersect $B(x, r)\cap\boz$, we have $\tilde u(y)=c$. Hence $u(y)=c$ for almost every $y\in B(x, r)\cap\partial\boz$.
\end{proof}

The following coarea formula for $BV$ functions can be found in \cite[Section 5.5]{Evans}. See also \cite[Theorem 2.2]{Tapio}.
\begin{proposition}
    Given a function $u\in BV(\boz)$, the superlevel sets $u_t=\{x\in\boz:u(x)>t\}$ have finite perimeter in $\boz$ for almost every $t\in\rr$ and 
    \[\|Du\|(F)=\int_{-\fz}^\fz P(u_t, F)\,dt\]
    for every Borel set $F\subset\boz$. Conversely, if $u\in L^1(\boz)$ and 
    \[\int_{-\fz}^\fz P(u_t,\boz)\,dt<\fz\]
    then $u\in BV(\boz)$.
\end{proposition}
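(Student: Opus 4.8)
The plan is to establish the identity first for $F=\boz$ as a pair of matching inequalities, then upgrade to arbitrary Borel $F$ by a measure-uniqueness argument, while the converse statement will fall out of one of the two inequalities at no extra cost. The workhorse throughout is the layer-cake (Cavalieri) representation: for $u\in L^1(\boz)$ one has, for a.e.\ $x$, $u(x)=\int_0^\fz\chi_{u_t}(x)\,dt-\int_{-\fz}^0\bigl(1-\chi_{u_t}(x)\bigr)\,dt$, where $\chi_{u_t}$ denotes the indicator of the superlevel set $u_t=\{u>t\}$. The joint measurability of $(x,t)\mapsto\chi_{u_t}(x)$ makes every Fubini interchange below legitimate, and measurability of $t\mapsto P(u_t,F)$ is obtained by writing $P(u_t,F)=\sup_\phi\int_{u_t}{\rm div}\,\phi\,dx$ over a countable family of $\phi\in C^1_o(F;\rn)$ dense (in the sup norm) in the unit ball, each integrand $t\mapsto\int_\boz\chi_{u_t}\,{\rm div}\,\phi\,dx$ being measurable by Fubini, so that the supremum of countably many measurable functions is measurable.

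For the inequality $\|Du\|(\boz)\le\int_{-\fz}^\fz P(u_t,\boz)\,dt$, which also yields the converse, I would test against $\phi\in C^1_o(\boz;\rn)$ with $|\phi|\le1$. Multiplying the layer-cake formula by ${\rm div}\,\phi$, integrating over $\boz$, and using $\int_\boz{\rm div}\,\phi\,dx=0$ to absorb the constant term, Fubini gives $\int_\boz u\,{\rm div}\,\phi\,dx=\int_{-\fz}^\fz\bigl(\int_\boz\chi_{u_t}\,{\rm div}\,\phi\,dx\bigr)\,dt$. Since $\int_\boz\chi_{u_t}\,{\rm div}\,\phi\,dx=\int_{u_t}{\rm div}\,\phi\,dx\le P(u_t,\boz)$ by the definition of perimeter, taking the supremum over $\phi$ yields the claimed bound. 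In particular, if $u\in L^1(\boz)$ satisfies $\int_{-\fz}^\fz P(u_t,\boz)\,dt<\fz$, this same chain shows $\|Du\|(\boz)<\fz$, i.e.\ $u\in BV(\boz)$, which is exactly the converse.

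The reverse inequality $\int_{-\fz}^\fz P(u_t,\boz)\,dt\le\|Du\|(\boz)$ is the main obstacle, and it is where approximation enters. I would pick smooth $u_k\in C^\fz(\boz)\cap W^{1,1}(\boz)$ with $u_k\to u$ in $L^1(\boz)$ and $\int_\boz|\nabla u_k|\,dx\to\|Du\|(\boz)$ (strict approximation of $BV$ functions). The crucial reduction is the pointwise identity $\int_{-\fz}^\fz|\chi_{(u_k)_t}(x)-\chi_{u_t}(x)|\,dt=|u_k(x)-u(x)|$, which upon integrating in $x$ gives $\int_{-\fz}^\fz\|\chi_{(u_k)_t}-\chi_{u_t}\|_{L^1(\boz)}\,dt=\|u_k-u\|_{L^1(\boz)}\to0$; hence along a subsequence, for a.e.\ $t$, one has $\chi_{(u_k)_t}\to\chi_{u_t}$ in $L^1(\boz)$. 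Lower semicontinuity of the relative perimeter under $L^1$ convergence then gives $P(u_t,\boz)\le\liminf_k P((u_k)_t,\boz)$ for a.e.\ $t$, and Fatou's lemma together with the classical coarea formula for smooth maps, $\int_{-\fz}^\fz P((u_k)_t,\boz)\,dt=\int_\boz|\nabla u_k|\,dx$, yields $\int_{-\fz}^\fz P(u_t,\boz)\,dt\le\liminf_k\int_\boz|\nabla u_k|\,dx=\|Du\|(\boz)$. The delicate points to get right are the extraction of the a.e.-$t$ subsequence and the applicability of lower semicontinuity to the \emph{relative} perimeter on $\boz$.

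Combining the two inequalities gives equality at $F=\boz$; since this common value is finite, $P(u_t,\boz)<\fz$ for a.e.\ $t$, which is the finite-perimeter assertion. To reach an arbitrary Borel set $F$, I would set $\mu(F):=\int_{-\fz}^\fz P(u_t,F)\,dt$ and observe that $\mu$ is a finite Radon measure on $\boz$, since $F\mapsto P(u_t,F)=\|D\chi_{u_t}\|(F)$ is a measure for each $t$ and countable additivity passes through the $t$-integral by monotone convergence. Repeating the easy-direction argument with test fields supported in an arbitrary open $U\subseteq\boz$ gives $\|Du\|(U)\le\mu(U)$, and by outer regularity of both Radon measures this extends to $\|Du\|(B)\le\mu(B)$ for every Borel $B$. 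Finally, $\mu-\|Du\|\ge0$ is then a nonnegative measure of total mass $\mu(\boz)-\|Du\|(\boz)=0$, hence identically zero, so $\|Du\|(F)=\int_{-\fz}^\fz P(u_t,F)\,dt$ for every Borel $F\subset\boz$, completing the proof.
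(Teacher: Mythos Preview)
The paper does not give its own proof of this proposition; it simply quotes the result from Evans--Gariepy, Section~5.5 (and a second reference). Your argument is correct and is exactly the standard proof one finds there: the layer-cake/Fubini computation for $\|Du\|(U)\le\int_{-\fz}^{\fz} P(u_t,U)\,dt$ on open $U$ (which simultaneously yields the converse), strict smooth approximation combined with lower semicontinuity of the perimeter and Fatou for the reverse inequality on $\boz$, and the final measure-comparison step to pass from open sets to arbitrary Borel $F$.
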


See \cite[Theorem 3.44]{Fusco} for the proof of the following $(1, 1)$-Poincar\'e inequality for $BV$ functions.
\begin{proposition}\label{prop:BVpoin}
    Let $\boz\subset\rn$ be a bounded Lipschitz domain. Then there exists a constant $C>0$ depending on $n$ and $\boz$ such that for every $u\in BV(\boz)$, we have 
    \[\int_\boz|u(y)-u_\boz|\,dy\leq C\|Du\|(\boz).\]
    In particular, there exists a constant $C>0$ only depending on $n$ so that if $Q, Q'\subset\rn$ are two closed dyadic cubes with $\frac{1}{4}l(Q')\leq l(Q)\leq 4l(Q')$ and $\boz:={\rm int}(Q\cup Q')$ connected, then for every $u\in BV(\boz)$,
    \begin{equation}\label{eq:poincare}
        \int_\boz|u(y)-u_\boz|\,dy\leq Cl(Q)\|Du\|(\boz).
    \end{equation}
\end{proposition}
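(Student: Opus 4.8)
The plan is to prove the inequality first on a fixed bounded connected Lipschitz domain by a soft compactness argument, and then to obtain the scale‑invariant estimate \eqref{eq:poincare} by rescaling, exploiting that up to translations and symmetries of the cube there are only finitely many domains of the form $\mathrm{int}(Q\cup Q')$ with $\tfrac14 l(Q')\le l(Q)\le 4l(Q')$ and connected interior.

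\textbf{Step 1 (fixed domain, by contradiction).} Suppose the first inequality fails for some bounded connected Lipschitz domain $\boz$. Then there are $u_k\in BV(\boz)$ with $\int_\boz|u_k-(u_k)_\boz|\,dy=1$ and $\|Du_k\|(\boz)\to 0$. Setting $v_k:=u_k-(u_k)_\boz$ we get $(v_k)_\boz=0$, $\|v_k\|_{L^1(\boz)}=1$ and $\|Dv_k\|(\boz)\to 0$, so $\sup_k\|v_k\|_{BV(\boz)}<\fz$. Since $\boz$ is a bounded Lipschitz domain, the embedding $BV(\boz)\hookrightarrow L^1(\boz)$ is compact, so along a subsequence $v_k\to v$ in $L^1(\boz)$, with $\|v\|_{L^1(\boz)}=1$ and $(v)_\boz=0$. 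Lower semicontinuity of the total variation under $L^1_{\mathrm{loc}}$-convergence gives $\|Dv\|(\boz)\le\liminf_k\|Dv_k\|(\boz)=0$, so $Dv=0$ on the connected open set $\boz$, and hence $v$ equals a constant almost everywhere, necessarily $(v)_\boz=0$. This contradicts $\|v\|_{L^1(\boz)}=1$, proving the first assertion.

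\textbf{Step 2 (rescaling and uniformity).} For $\boz$ as in the ``in particular'' part put $\ell:=l(Q)$, let $T(x):=\ell^{-1}x$ and $\boz':=T(\boz)$, and for $u\in BV(\boz)$ set $\tilde u:=u\circ T^{-1}\in BV(\boz')$. A change of variables gives $\int_{\boz'}|\tilde u-\tilde u_{\boz'}|\,dy=\ell^{-n}\int_\boz|u-u_\boz|\,dx$ and $\|D\tilde u\|(\boz')=\ell^{1-n}\|Du\|(\boz)$, so Step~1 applied on $\boz'$ yields $\int_\boz|u-u_\boz|\,dx\le C(\boz')\,\ell\,\|Du\|(\boz)$. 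It remains to see that $C(\boz')$ may be chosen depending only on $n$: after a translation we may assume $Q=[0,1]^n$, so $\ell=1$ and $Q'$ is an axis‑parallel cube of sidelength $\ell'\in\{\tfrac14,\tfrac12,1,2,4\}$ meeting $Q$ across a common face portion of positive $\HH^{n-1}$-measure, its position relative to $Q$ being constrained by the underlying dyadic structure. For each of the five values of $\ell'$, and up to the symmetries of the cube, there are only finitely many such positions, a number bounded in terms of $n$ alone; hence the collection of normalized domains $\boz'$ is finite and determined by $n$. Each such $\boz'=\mathrm{int}(Q\cup Q')$ is a bounded connected Lipschitz domain, so Step~1 applies to it with some constant $C(\boz')$, and $C:=\max_{\boz'}C(\boz')$ depends only on $n$, as claimed.

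The change of variables in Step~2 and the enumeration of configurations are routine; the only substantive inputs are the compact embedding $BV(\boz)\hookrightarrow L^1(\boz)$ for bounded Lipschitz domains and the lower semicontinuity of the total variation, both standard (see e.g.\ \cite{Evans,Fusco}). The point that requires care — the main obstacle — is exactly the uniformity of the constant in the two‑cube estimate: a direct appeal to the fixed‑domain inequality would leave a constant depending on the particular pair $Q,Q'$, and it is the finiteness‑of‑configurations observation (equivalently, a uniform bound on the Lipschitz character of all these two‑cube domains) that upgrades this to a constant depending on $n$ only.
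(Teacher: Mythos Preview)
Your argument is correct. The paper does not supply its own proof of this proposition; it simply refers to \cite[Theorem 3.44]{Fusco} for the first assertion and records the two-cube consequence \eqref{eq:poincare} without further justification. Your compactness-and-contradiction argument in Step~1 is the standard route (and is essentially the proof given in the cited reference), resting on the compact embedding $BV(\boz)\hookrightarrow L^1(\boz)$ for bounded Lipschitz domains together with the lower semicontinuity of the total variation under $L^1$-convergence. Step~2 then supplies what the paper leaves implicit: the scale-invariant constant in \eqref{eq:poincare} follows by rescaling, once one observes that after normalizing $Q$ to the unit cube the dyadic constraint and the connectedness hypothesis leave only finitely many possible shapes for $\mathrm{int}(Q\cup Q')$, each a bounded connected Lipschitz domain, so that the maximum of the finitely many constants from Step~1 depends only on $n$. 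One small point you assert without proof is that each such two-cube union is Lipschitz; this is true (the only singularities are reentrant right-angle corners and edges, which are Lipschitz), but it is worth a one-line remark since Step~1 genuinely needs it for the compact embedding.
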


\section{A set function arising from the extension}\label{sec:set}
In this subsection, we introduce a set function defined on the class of open sets in $\rn$ and taking nonnegative values. Our set function here is a modification of the one originally introduced by Ukhlov \cite{ukhlov1, ukhlov2}. See also \cite{VoUk1, VoUk2} for related set functions.
The modified version of the set function we use is from \cite{KUZ}, where it was used by Koskela, Ukhlov and the second named author to study the size of the boundary of a $(W^{1, p}, W^{1, q})$-extension domains. Let us recall that a set function $\Phi$ defined on the class of open subsets of $\rn$ and taking nonnegative values is called quasiadditive (see for example \cite{VoUk1}), if for all open sets $U_1\subset U_2\subset\rn$, we have 
\[\Phi(U_1)\leq\Phi(U_2),\]
and there exists a positive constant $C$ such that for arbitrary pairwise disjoint open sets $\{U_i\}_{i=1}^\fz$, we have 
\begin{equation}\label{eq:quasiadditive}
    \sum_{i=1}^\fz\Phi(U_i)\leq C\Phi\lf(\bigcup_{i=1}^\fz U_i\r).
\end{equation}

Let $\boz\subset\rn$ be a $(W^{1, p}, BV)$-extension domain for some $1<p<\fz$. For every open set $U\subset\rn$ with $U\cap\boz\neq\emptyset$, we define 
\[W^p_0(U, \boz):=\lf\{u\in W^{1, p}(\boz)\cap C(\boz): u\equiv 0\ {\rm on}\ \boz\setminus U\r\}.\]
For every $u\in W^p_0(U, \boz)$, we define 
\[\Gamma(u):=\inf\lf\{\|Dv\|(U): v\in BV(\rn), v\big|_{\boz}\equiv u\r\}.\]
Then we define the set function $\Phi$ by setting 
\begin{equation}\label{eq:setfunction}
   \Phi(U):=\begin{cases}\sup_{u\in W^p_0(U, \boz)}\lf(\frac{\Gamma(u)}{\|u\|_{W^{1, p}(U\cap\boz)}}\r)^{k}, \ \ \text{with } \frac{1}{k}=1-\frac{1}{p},
   & \text{if }U\cap\boz\neq\emptyset,\\
   0, & \text{otherwise.}
   \end{cases}
\end{equation}
The proof of the following lemma is almost the same as the proof of \cite[Theorem 3.1]{KUZ}. One needs to simply replace $\|Dv\|_{L^q(U)}$ by $\|Dv\|(U)$ in the proof of \cite[Theorem 3.1]{KUZ} and repeat the argument. 
\begin{lemma}\label{le:setfunction}
   Let $1<p<\fz$ and let $\boz\subset\rn$ be a bounded $(W^{1, p}, BV)$-extension domain. Then the set function defined in (\ref{eq:setfunction}) for all open subsets of $\rn$ is bounded and quasiadditive.
\end{lemma}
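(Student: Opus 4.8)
The plan is to follow the argument of \cite[Theorem 3.1]{KUZ} verbatim, replacing the $L^q$-norm of the gradient by the total variation measure $\|D\cdot\|$ throughout, and to check that no step used any property of $\|D\cdot\|_{L^q}$ that fails for $\|D\cdot\|$. Concretely, there are three things to verify: (i) boundedness of $\Phi$, (ii) monotonicity $\Phi(U_1)\le\Phi(U_2)$ for $U_1\subset U_2$, and (iii) the quasiadditivity estimate \eqref{eq:quasiadditive}.

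For boundedness, I would fix the extension operator $E\colon W^{1,p}(\boz)\to BV(\rn)$ with norm $C$. Given any open $U$ with $U\cap\boz\ne\emptyset$ and any $u\in W^p_0(U,\boz)$, the function $Eu\in BV(\rn)$ satisfies $Eu|_\boz\equiv u$, so $Eu$ is an admissible competitor in the infimum defining $\Gamma(u)$, whence $\Gamma(u)\le\|D(Eu)\|(U)\le\|D(Eu)\|(\rn)\le\|Eu\|_{BV(\rn)}\le C\|u\|_{W^{1,p}(\boz)}$. Since $u\equiv 0$ on $\boz\setminus U$, we have $\|u\|_{W^{1,p}(\boz)}=\|u\|_{W^{1,p}(U\cap\boz)}$, so $\Gamma(u)/\|u\|_{W^{1,p}(U\cap\boz)}\le C$, and raising to the power $k$ gives $\Phi(U)\le C^k$. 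Monotonicity is immediate from the definition: if $U_1\subset U_2$ then $W^p_0(U_1,\boz)\subset W^p_0(U_2,\boz)$, and for each admissible $u$ both the numerator $\Gamma(u)$ (an infimum over $v$ of $\|Dv\|(\cdot)$, which is monotone in the open set since $\|Dv\|$ is a measure) and the way the supremum is taken only increase when passing to $U_2$ — one has to be slightly careful that $\|u\|_{W^{1,p}(U_1\cap\boz)}\le\|u\|_{W^{1,p}(U_2\cap\boz)}$ as well, but since both numerator and denominator are monotone one argues as in \cite{KUZ} that the ratio-supremum is monotone, using that $u\in W^p_0(U_1,\boz)$ vanishes outside $U_1\cap\boz$ so its $W^{1,p}$-norm on $U_1\cap\boz$ and $U_2\cap\boz$ agree.

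The heart of the matter is quasiadditivity. Given pairwise disjoint open sets $\{U_i\}$ and, for each $i$, a near-optimal competitor $u_i\in W^p_0(U_i,\boz)$, one forms $u:=\sum_i u_i$; because the $U_i$ are disjoint and each $u_i$ vanishes outside $U_i\cap\boz$, this sum is locally finite on $\boz$, lies in $W^p_0(\bigcup_i U_i,\boz)$, and $\|u\|_{W^{1,p}(\boz)}^p=\sum_i\|u_i\|_{W^{1,p}(U_i\cap\boz)}^p$. Taking an extension $v=E u\in BV(\rn)$, one has $\|D v\|(U_i)\ge\Gamma(u_i)$ for each $i$ by definition of $\Gamma$ (the restriction argument: $v|_\boz\equiv u\equiv u_i$ on $\boz$, and... here is the one genuinely delicate point — one needs $v$ to be an admissible competitor in $\Gamma(u_i)$, which requires $v|_\boz\equiv u_i$; this holds because $v|_\boz\equiv u$ and $u\equiv u_i$ only on $U_i\cap\boz$, so in \cite{KUZ} one instead truncates/localizes $v$ to $U_i$, or uses that $\Gamma(u_i)$ only sees $\|Dv\|(U_i)$ and $u_i$ coincides with $u$ there). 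Summing over $i$ and using that $\{U_i\}$ are disjoint open sets so $\sum_i\|Dv\|(U_i)\le\|Dv\|(\bigcup_i U_i)$ — this is exactly where the $L^q$-norm is replaced by the measure $\|Dv\|$, and disjointness of open sets makes the inequality trivial for a measure, in fact with constant $1$ — one gets $\sum_i\Gamma(u_i)\le\|Dv\|(\bigcup_i U_i)\le C\|u\|_{W^{1,p}(\boz)}=C\big(\sum_i\|u_i\|_{W^{1,p}(U_i\cap\boz)}^p\big)^{1/p}$. Combining with $\Gamma(u_i)/\|u_i\|_{W^{1,p}(U_i\cap\boz)}\le\Phi(U_i)^{1/k}$ and applying the elementary inequality relating $\ell^1$ and $\ell^p$ sums (together with $\Phi(\bigcup U_i)^{1/k}\ge\Gamma(u)/\|u\|_{W^{1,p}}$) yields $\sum_i\Phi(U_i)\le C\,\Phi(\bigcup_i U_i)$, as in \cite[Theorem 3.1]{KUZ}. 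The main obstacle is bookkeeping the localization of the $BV$ extension $v$ to each $U_i$ so that it legitimately competes in $\Gamma(u_i)$; once one observes that $\Gamma(u_i)$ only constrains $v$ through its restriction to $\boz$ and its total variation on $U_i$, and that $u$ agrees with $u_i$ on $U_i\cap\boz$ while both vanish on $\boz\setminus U_i$, the argument goes through exactly as in the $W^{1,q}$ case with $\|Dv\|_{L^q(U_i)}$ replaced by $\|Dv\|(U_i)$.
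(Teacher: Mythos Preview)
Your proposal is correct and matches the paper's own approach exactly: the paper's proof consists solely of the remark that one repeats \cite[Theorem~3.1]{KUZ} verbatim with $\|\nabla v\|_{L^q(U)}$ replaced by $\|Dv\|(U)$, which is precisely your plan. The one slip in your closing paragraph---asserting that $u=\sum_i u_i$ vanishes on $\boz\setminus U_i$, which is false since $u=u_j$ on $U_j\cap\boz$ for $j\ne i$---does not undermine the plan, as you had already correctly flagged this localization issue a few lines earlier and deferred its resolution to \cite{KUZ}.
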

The upper and lower derivatives of a quasiadditive set function $\Phi$ are defined by setting 
\[\overline{D\Phi}(x):=\limsup_{r\to0^+}\frac{\Phi(B(x, r))}{|B(x, r)|}\quad {\rm and}\quad \underline{D\Phi}(x) = \liminf_{r\to 0^+}\frac{\Phi(B(x,r))}{|B(x, r)|}.\]
By \cite{Rado,VoUk1}, we have the following lemma. See also \cite[Lemma 3.1]{KUZ}.
\begin{lemma}\label{le:setFbound}
  Let $\Phi$ be a bounded and quasiadditive set function defined on open sets $U\subset\rn$. Then $\overline{D\Phi}(x)<\fz$ for almost every $x\in\rn$.
\end{lemma}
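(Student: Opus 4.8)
The plan is to run the classical Vitali-type differentiation argument for monotone quasiadditive set functions, exactly as in the references \cite{Rado,VoUk1}. Write $C$ for the quasiadditivity constant appearing in \eqref{eq:quasiadditive}. The key estimate to establish is that for every $\lambda>0$,
\[
\lf|\lf\{x\in\rn:\overline{D\Phi}(x)>\lambda\r\}\r|\le\frac{C\,\Phi(\rn)}{\lambda}.
\]
Granting this, since $\{x\in\rn:\overline{D\Phi}(x)=\fz\}=\bigcap_{m\in\nn}\{x:\overline{D\Phi}(x)>m\}$ and $\Phi(\rn)<\fz$ because $\Phi$ is bounded, we obtain $\lf|\{x:\overline{D\Phi}(x)=\fz\}\r|\le C\Phi(\rn)/m$ for every $m\in\nn$, hence this set is Lebesgue null and $\overline{D\Phi}(x)<\fz$ for almost every $x\in\rn$.

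To prove the estimate, fix $\lambda>0$ and $R>0$, and set $E:=\{x\in B(0,R):\overline{D\Phi}(x)>\lambda\}$. For each $x\in E$, the definition of $\overline{D\Phi}(x)$ as a $\limsup$ over $r\to0^+$ produces arbitrarily small radii $r>0$, which we may take small enough that $B(x,r)\subset B(0,R+1)$, with $\Phi(B(x,r))>\lambda\,|B(x,r)|$. The collection of all such balls is therefore a fine (Vitali) cover of $E$, and since $|E|<\fz$, the Vitali covering theorem yields a countable pairwise disjoint subfamily $\{B_i\}_{i\in\nn}$ of these balls, all contained in $B(0,R+1)$, with $\lf|E\setminus\bigcup_i B_i\r|=0$.

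Now we feed this selection into the two defining properties of $\Phi$. Monotonicity together with quasiadditivity \eqref{eq:quasiadditive}, applied to the pairwise disjoint open sets $\{B_i\}$, gives
\[
\sum_{i}\Phi(B_i)\le C\,\Phi\lf(\bigcup_i B_i\r)\le C\,\Phi(B(0,R+1))\le C\,\Phi(\rn)<\fz,
\]
while each selected ball satisfies $\Phi(B_i)>\lambda|B_i|$. Hence $\lambda\sum_i|B_i|\le C\Phi(\rn)$, and since $|E|\le\sum_i|B_i|$ we conclude $|E|\le C\Phi(\rn)/\lambda$. Letting $R\to\fz$ yields the displayed estimate, and the lemma follows.

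The argument is entirely soft, so there is no genuine analytic obstacle; the points that require care are to pass to the finite-measure piece $E\subset B(0,R)$ before invoking the Vitali covering theorem (so that a disjoint subfamily covering $E$ up to measure zero is available), to keep the selected balls inside the fixed bounded ball $B(0,R+1)$, and to observe that it is precisely the boundedness of $\Phi$ — via monotonicity, $\Phi(U)\le\Phi(\rn)<\fz$ for every open $U\subset\rn$ — that keeps $\Phi\big(\bigcup_i B_i\big)$ finite and thus drives the estimate.
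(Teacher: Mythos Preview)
Your argument is correct: it is precisely the classical Vitali--type differentiation proof that the paper defers to the cited references \cite{Rado,VoUk1} (and \cite[Lemma~3.1]{KUZ}), so there is no substantive difference in approach. The only cosmetic point is that measurability of $\{\overline{D\Phi}>\lambda\}$ is not addressed, but since your weak-type bound holds for the Lebesgue \emph{outer} measure and the conclusion only asserts that $\{\overline{D\Phi}=\fz\}$ is null, this is harmless.
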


The following lemma immediately comes from the definition (\ref{eq:setfunction}) for the set function $\Phi$.
\begin{lemma}\label{cor:extension}
 Let $1<p<\fz$ and let $\boz\subset\rn$ be a bounded $(W^{1, p}, BV)$-extension domain. Then, for a ball $B(x, r)$ with $x\in\partial\boz$ and every function $u\in W^p_0(B(x, r), \boz)$, there exists a function $v\in BV(B(x, r))$ with $v\big|_{B(x, r)\cap\boz}\equiv u$ and 
 \begin{equation}\label{eq:extension}
     \|Dv\|(B(x, r))\leq 2\Phi^{\frac{1}{k}}(B(x, r))\|u\|_{W^{1, p}(B(x, r)\cap\boz)}, \ \ {\rm where}\ \ \frac{1}{k}=1-\frac{1}{p}.
 \end{equation}
\end{lemma}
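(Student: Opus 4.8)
The plan is to unwind the definition of $\Phi$ in \eqref{eq:setfunction} and read off the claimed inequality. Fix a ball $B(x,r)$ with $x\in\partial\boz$; in particular $B(x,r)\cap\boz\neq\emptyset$, so $\Phi(B(x,r))$ is given by the first line of \eqref{eq:setfunction}. Let $u\in W^p_0(B(x,r),\boz)$. If $\|u\|_{W^{1,p}(B(x,r)\cap\boz)}=0$ then $u\equiv 0$ on $B(x,r)\cap\boz$, and since $u\equiv 0$ on $\boz\setminus B(x,r)$ we get $u\equiv 0$ on $\boz$; taking $v\equiv 0$ settles this degenerate case. So assume $\|u\|_{W^{1,p}(B(x,r)\cap\boz)}>0$.

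First I would note that, by the definition of $\Phi$ as a supremum over $W^p_0(B(x,r),\boz)$,
\[
\left(\frac{\Gamma(u)}{\|u\|_{W^{1,p}(B(x,r)\cap\boz)}}\right)^{k}\leq\Phi(B(x,r)),
\]
hence $\Gamma(u)\leq\Phi^{1/k}(B(x,r))\,\|u\|_{W^{1,p}(B(x,r)\cap\boz)}$. Next, recall that $\Gamma(u)=\inf\{\|Dv\|(B(x,r)): v\in BV(\rn),\ v|_{\boz}\equiv u\}$. By the definition of the infimum, for the admissible multiplicative error $2$ there exists $v\in BV(\rn)$ with $v|_{\boz}\equiv u$ and
\[
\|Dv\|(B(x,r))\leq 2\,\Gamma(u)\leq 2\,\Phi^{1/k}(B(x,r))\,\|u\|_{W^{1,p}(B(x,r)\cap\boz)}.
\]
(If $\Gamma(u)=0$ one picks $v$ with $\|Dv\|(B(x,r))$ smaller than any prescribed positive number; the displayed bound still holds since its right-hand side is then required to be nonnegative, and in fact $\Phi^{1/k}(B(x,r))\|u\|_{W^{1,p}(B(x,r)\cap\boz)}\geq\Gamma(u)=0$, so any choice with $\|Dv\|(B(x,r))$ small works, and the stated inequality with the factor $2$ is trivially satisfiable.) Finally, restricting such a $v$ to $B(x,r)$ gives an element of $BV(B(x,r))$ with $v|_{B(x,r)\cap\boz}\equiv u$ and the desired estimate \eqref{eq:extension}, since $\|Dv\|$ computed on the open set $B(x,r)$ only sees $v|_{B(x,r)}$.

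There is essentially no obstacle here: the lemma is a bookkeeping consequence of the definitions, the only points requiring a word of care being the degenerate cases $\|u\|_{W^{1,p}(B(x,r)\cap\boz)}=0$ and $\Gamma(u)=0$, which is why the factor $2$ (rather than $1$) appears in \eqref{eq:extension}. One should also make sure that $\Phi(B(x,r))<\fz$ so that $\Phi^{1/k}(B(x,r))$ is a genuine finite number; this is guaranteed by Lemma~\ref{le:setfunction}, which asserts that $\Phi$ is bounded.
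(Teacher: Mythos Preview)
Your proposal is correct and matches the paper's approach exactly: the paper does not write out a proof but simply states that the lemma ``immediately comes from the definition \eqref{eq:setfunction} for the set function $\Phi$,'' and your argument is precisely the routine unwinding of that definition together with the definition of $\Gamma(u)$ as an infimum (the factor $2$ absorbing the fact that the infimum need not be attained). The only minor remark is that nonemptiness of the set over which $\Gamma(u)$ is defined is guaranteed by the $(W^{1,p},BV)$-extension hypothesis, which you might mention explicitly, but this is already implicit in the setup.
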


\section{Proofs of the results}
In this section we prove Theorems \ref{thm:1.1} and \ref{thm:Rn}.
\begin{proof}[Proof of \Cref{thm:1.1}]
   Let us first assume that $\boz\subset\rn$ is a $(BV,BV)$-extension domain with the extension operator $E$. Since $W^{1,1}(\boz)\subset BV(\boz)$ with $\|u\|_{BV(\boz)}=\|\nabla u\|_{L^1(\boz)}$ for every $u\in W^{1, 1}(\boz)$, we have $$\|E(u)\|_{BV(\boz)}\leq C\|u\|_{BV(\boz)}\leq C\|u\|_{W^{1, 1}(\boz)}.$$
   This implies that $\boz$ is a $\lf(W^{1,1}, BV\r)$-extension domain with the same operator $E$ restricted to $W^{1,1}(\Omega)$. 
   
   Let us then prove the converse and assume that
    $\boz\subset\rn$ is a $(W^{1,1}, BV)$-extension domain with an extension operator $E$. Let $S_{\boz,\boz}$ be the Whitney smoothing operator defined in \cite{Tapio}. Then by \cite[Theorem 3.1]{Tapio}, for every $u\in BV(\boz)$, we have $S_{\boz,\boz}(u)\in W^{1, 1}(\boz)$ with 
   \[\|S_{\boz,\boz}(u)\|_{W^{1, 1}(\boz)}\leq C\|u\|_{BV(\boz)}\]
   for a positive constant $C$ independent of $u$, and 
   \begin{equation}\label{eq:boundary}
   \|D(u-S_{\boz, \boz}(u))\|(\partial\boz)=0,
   \end{equation}
   where $u-S_{\boz,\boz}(u)$ is understood to be defined on the whole space $\rn$ via a zero-extension. Then $E(S_{\boz, \boz}(u))\in BV(\rn)$ with 
   \[\|E(S_{\boz, \boz}(u))\|_{BV(\rn)}\leq C\|S_{\boz, \boz}(u)\|_{W^{1, 1}(\boz)}\leq C\|u\|_{BV(\boz)}.\]
   Now, define $T\colon BV(\Omega) \to BV(\rn)$ by setting for every $u \in BV(\Omega)$
   \[T(u)(x):=\begin{cases}
   u(x),\ \ {\rm if}\ \ x\in \boz\\
   E(S_{\boz, \boz}(u))(x),\ \ {\rm if}\ \ x\in\rn\setminus\boz.
   \end{cases}\]
   By (\ref{eq:boundary}), we have $T(u)\in BV(\rn)$ with 
   \[\|T(u)\|_{BV(\rn)}\leq\|E(S_{\boz,\boz}(u))\|_{BV(\rn)}+\|u\|_{BV(\boz)}\leq C\|u\|_{BV(\boz)}.\]
   Hence, $\boz$ is a $BV$-extension domain.
\end{proof}

\begin{proof}[Proof of \Cref{thm:Rn}]
Assume towards a contradiction that $|\partial\boz|>0$.  By the Lebesgue density point theorem and \Cref{le:setFbound}, there exists a measurable subset $U$ of $\partial\boz$ with $|U|=|\partial\boz|$ such that every $x\in U$ is a Lebesgue point of $\partial\boz$ and $\overline{D\Phi}(x)<\fz$. Fix $x\in U$. Since $x$ is a Lebesgue point, there exists a sufficiently small $r_x>0$, such that for every $0<r<r_x$, we have 
\[\lf|B(x, r)\cap\overline\boz\r|\geq\frac{1}{2^{n-1}}\lf|B(x, r)\r|.\]
Let $r\in(0, r_x)$ be fixed. Since $|\partial B(x, s)|=0$ for every $s\in (0, r)$, we have
\begin{equation}\label{eq:volume1}
    \lf|B\lf(x, \frac{r}{4}\r)\cap\overline\boz\r|\geq\frac{1}{2^{n-1}}\lf|B\lf(x, \frac{r}{4}\r)\r|\geq\frac{1}{2^{3n-1}}\lf|B(x, r)\r|
\end{equation}
and 
\begin{equation}\label{eq:volume2}
    \lf|\lf(B(x,r)\setminus B\lf(x, \frac{r}{2}\r)\r)\cap\overline\boz\r|\geq |B(x, r)\cap\overline\boz|-\lf|B\lf(x, \frac{r}{2}\r)\r|\geq\frac{1}{2^n}|B(x, r)|.
\end{equation}
Define a test function $u\in W^{1, p}(\boz)\cap C(\boz)$ by setting 
  \begin{equation}\label{eq:testF}
     u(y):=\begin{cases}
     1,\ \ &{\rm if}\ y\in B\lf(x, \frac{r}{4}\r)\cap\boz,\\
     \frac{-4}{r}|y-x|+2,\ \ &{\rm if}\ y\in \lf(B\lf(x, \frac{r}{2}\r)\setminus B\lf(x, \frac{r}{4}\r)\r)\cap\boz,\\
     0,\ \ &{\rm if}\ y\in\boz\setminus B\lf(x, \frac{r}{2}\r).
     \end{cases}
  \end{equation}
 We have 
\begin{equation}\label{eq:upperbound}
    \lf(\int_{B(x, r)\cap\boz}|u(y)|^p+|\nabla u(y)|^p\,dx\r)^{\frac{1}{p}}\leq\frac{C}{r}|B(x, r)\cap\boz|^{\frac{1}{p}}.
\end{equation}
Since $u\equiv 0$ on $\boz\setminus B(x, r/2)$, we have $u\in W^p_0(B(x, r), \boz)$. Then, by the definition \eqref{eq:setfunction} of the set function $\Phi$ and by \Cref{cor:extension}, there exists a function $v\in BV(B(x, r))$ with $v\big|_{B(x, r)\cap\boz}\equiv u$ and  
\begin{equation}\label{eq:extenbound}
\|Dv\|(B(x, r))\leq 2\Phi^{\frac{1}{k}}(B(x, r))\|u\|_{W^{1, p}(B(x, r)\cap\boz)}.
\end{equation}
By the Poincar\'e inequality of $BV$ functions stated in \Cref{prop:BVpoin}, we have 
\begin{equation}\label{eq:BVpoincare}
    \int_{B(x, r)}|v(y)-v_{B(x, r)}|\,dy\leq C r\|Dv\|(B(x, r)).
\end{equation}
Since $\boz$ is $1$-fat on almost every $z\in\partial\boz$, by \Cref{le:fat}, $v(z)=1$ for almost every $z\in B\lf(x,\frac{r}{4}\r)\cap\partial\boz$ and $v(z)=0$ for almost every $z\in \lf(B(x,r)\setminus B\lf(x, \frac{r}{2}\r)\r)\cap\partial\boz$. Hence, on one hand, if $v_{B(x, r)}\leq\frac{1}{2}$, we have 
\[\int_{B(x, r)}|v(y)-v_{B(x, r)}|\,dy\geq\frac{1}{2}\lf|B\lf(x, \frac{r}{4}\r)\cap\overline\boz\r|\geq c|B(x, r)|.\]
On the other hand, if $v_{B(x, r)}>\frac{1}{2}$, we have 
\[\int_{B(x, r)}|v(y)-v_{B(x, r)}|\,dy\geq\frac{1}{2}\lf|\lf(B(x, r)\setminus B\lf(x, \frac{r}{2}\r)\r)\cap\overline\boz\r|>c|B(x, r)|.\]
All in all, we always have 
\begin{equation}\label{eq:lowerbound}
    \int_{B(x, r)}|v(y)-v_{B(x, r)}|\,dy\geq c|B(x, r)|
\end{equation}
for a sufficiently small constant $c>0$. Thus, by combining inequalities (\ref{eq:upperbound})-(\ref{eq:lowerbound}), we obtain
\[\Phi(B(x, r))^{p-1}|B(x, r)\cap\boz|\geq c|B(x, r)|^p\]
for a sufficiently small constant $c>0$. This gives 
  \[|B(x, r)\cap\partial\boz|\leq |B(x, r)|-|B(x, r)\cap\boz|\leq |B(x, r)|-C\frac{|B(x, r)|^p}{\Phi(B(x, r))^{p-1}}.\]
 Since $\overline{D\Phi}(x)<\fz$, we have 
 \begin{eqnarray}
     \limsup_{r\to0^+}\frac{|B(x, r)\cap\partial\boz|}{|B(x, r)|}&\leq&\limsup_{r\to0^+}\lf(1-\frac{|B(x, r)\cap\boz|}{|B(x, r)|}\r)\nonumber\\
         &\leq&\limsup_{r\to0^+}\lf(1-\frac{|B(x, r)|^{p-1}}{\Phi(B(x, r))^{p-1}}\r)\leq1-c\overline{D\Phi}(x)^{1-p}<1.\nonumber
  \end{eqnarray}
 This contradicts the assumption that $x$ is a Lebesgue point of $\partial\boz$. Hence, we conclude that $|\partial\boz|=0$.
 
 Let us then consider the case $\boz\subset\rr^2$. By \cite[Theorem A.29]{HenclandPekka}, for every $x\in\partial\boz$ and every $0<r<\min\lf\{1, \frac{1}{4}\diam(\boz)\r\}$, we have
 \[Cap_1(\boz\cap B(x, r); B(x, 2r))\geq cr\]
 for a constant $0<c<1$. This implies that $\boz$ is $1$-fat at every $x\in\partial\boz$. Hence, by combining this with the first part of the theorem, we have that the boundary of any planar $(W^{1, p}, BV)$-extension domain is of volume zero.
\end{proof}
\bibliographystyle{alpha}
\bibliography{Bibliography} 
\end{document}